\newcommand{\mmp}{\mathbb{P}}
\newcommand{\od}{\overset{d}{=}}
\newcommand{\dod}{\overset{d}{\to}}
\newcommand{\tp}{\overset{{\mathbb P}}{\to}}
\newcommand{\me}{\mathbb{E}}
\newcommand{\mr}{\mathbb{R}}
\newcommand{\mn}{\mathbb{N}}
\newcommand{\wwp}{P}
\newcommand{\lin}{\underset{n\to\infty}{\lim}}
\newcommand{\lix}{\underset{x\to\infty}{\lim}}
\newcommand{\lit}{\underset{t\to\infty}{\lim}}
\newtheorem{thm}{Theorem}[section]
\newtheorem{lemma}[thm]{Lemma}
\newtheorem{example}[thm]{Example}
\newtheorem{cor}[thm]{Corollary}
\theoremstyle{definition}
\theoremstyle{remark}
\newtheorem{rem}[thm]{Remark}
\begin{document}
\title{Limit theorems for the number of occupied boxes in the  Bernoulli sieve}
\date{\today}

\author{Alexander Gnedin\footnote{
Department of Mathematics, Utrecht University, Postbus 80010, 3508
TA Utrecht, The Netherlands, e-mail: A.V.Gnedin@uu.nl},\quad
Alexander Iksanov\footnote{ Faculty of Cybernetics, National T.
Shevchenko University of Kiev, 01033 Kiev, Ukraine, e-mail:
iksan@unicyb.kiev.ua}\quad and\quad Alexander Marynych\footnote{
Faculty of Cybernetics, National T. Shevchenko University of Kiev,
01033 Kiev, Ukraine, e-mail: marinich@voliacable.com}}

\maketitle
\begin{abstract}
\noindent
The Bernoulli sieve is a version of the classical  `balls-in-boxes' occupancy scheme,
in which random frequencies of infinitely many boxes are produced by a multiplicative
renewal process, also known as the residual allocation model or stick-breaking.
We focus on the number $K_n$ of
boxes occupied by at least one of $n$ balls, as $n\to\infty$.
A variety of limiting distributions for $K_n$ is derived from the
properties of associated  perturbed random walks.
Refining the approach based on the standard renewal theory
we remove a moment constraint to cover the cases left open in previous studies.

\end{abstract}


\section{Introduction and main result}
In a classical occupancy scheme  balls are thrown independently in
an infinite series of boxes with probability $p_k$ of hitting box
$k=1,2,\dots$, where $(p_k)_{k\in\mn}$ is a fixed collection of
positive frequencies summing up to unity. A quantity of
traditional interest is the number $K_n$ of boxes occupied by at
least one of $n$ balls. In concrete applications `boxes'
correspond to distinguishable species or types, and $K_n$ is the
number of distinct species represented in a random sample of size
$n$. Starting from Karlin's fundamental paper \cite{Karlin}, the
behaviour of  $K_n$ was studied by many authors \cite{Barbour,
Dutko, HwangJanson, Mirakhmedov}. In particular, it is known that
the limiting distribution of $K_n$ is normal if the variance of
$K_n$ goes to infinity with $n$, which holds when $p_k$'s have a
power-like  decay, but does not hold when $p_k$'s decay
exponentially as $k\to\infty$ \cite{BGY}. See \cite{BarGn,
GneHanPit} for survey  of recent results on the infinite
occupancy.

Less explored are the mixture models in which frequencies are
themselves random variables $(P_k)_{k\in\mn}$, while the balls are
allocated independently conditionally given the frequencies. The
model is important in many contexts related to sampling from
random discrete distributions, and may be interpreted as the
occupancy scheme in random environment. The variability of the
allocation of balls is then affected by both the randomness in
sampling and the randomness of the environment. With respect to
$K_n$,
 the environment  may be called {\it strong} if the randomness in $(P_k)$ has dominating effect.
 One way to capture this idea is to consider
the conditional expectation
$$R^\ast_n:={\mathbb E}(K_n\,|\,(P_k))=\sum_{k=1}^\infty {\mathbb E}(1-(1-P_k)^n)$$
and to compare fluctuations of $K_n$ about $R^\ast_n$ with
fluctuations of $R^\ast_n$. By Karlin's \cite{Karlin} law of large
numbers, we always have $K_n\sim R^\ast_n$~{\rm a.s.} (as
$n\to\infty$) so the environment may be regarded as strong if the
sampling variability is negligible to the extent that $R^\ast_n$
and $K_n$, normalized by the same constants, have the same
limiting distributions, see \cite{GPYI} for examples.

In this paper we focus on the limiting distributions of $K_n$ for
the Bernoulli sieve \cite{Gne, GIR, GINR}, which is the infinite
occupancy scheme with random frequencies
\begin{equation}\label{17}
\wwp_k:=W_1W_2\cdots W_{k-1}(1-W_k), \ \ k\in\mn,
\end{equation}
where $(W_k)_{k\in\mn}$ are independent copies of a random
variable $W$ taking values in $(0,1)$.
From a viewpoint,  $K_n$ is the number of blocks of regenerative composition structure
 \cite{BarGnSlow, GPYI} induced by a compound Poisson process with jumps $|\log W_k|$.
Discrete probability distributions with random masses (\ref{17})
are sometimes called residual allocation models, the best known
being the instance associated with Ewens' sampling formula when
$W\stackrel{d}{=}{\rm beta}(c,1)$ for $c>0$. Following \cite{Gne,
GINR}, frequencies \eqref{17} can be considered as sizes of the
component intervals obtained by splitting $[0,1]$ at points of the
multiplicative renewal process $(Q_k: k\in\mn_0)$, where
$$Q_0:=1, \ \ Q_j:=\prod_{i=1}^j W_i, \ \ j\in\mn.$$ Accordingly, boxes can be identified with
open intervals $(Q_k, Q_{k-1})$, and balls with points of an
independent sample $U_1,\ldots,U_n$ from the uniform distribution
on $[0,1]$ which is independent of $(Q_k)$. In this representation
balls $i$ and $j$ occupy the same box iff points $U_i$ and $U_j$
belong to the same component interval.

Throughout we assume that the distribution of $|\log W|$ is
non-lattice, and use the following notation for the moments
$$\mu:=\me |\log W|, \ \ \sigma^2:={\rm Var}\,(\log W) \ \ \text{and} \ \ \nu:=\me |\log (1-W)|,$$
which may be finite or infinite.

Under the assumptions $\nu<\infty$ and $\sigma^2<\infty$, the CLT
for $K_n$ was proved in \cite{Gne} by using the analysis of random
recursions. Assuming only that $\nu<\infty$ in \cite{GINR} a
criterion of weak convergence for $K_n$ was derived from that for
\begin{eqnarray}\label{nx} \rho^\ast(x):=\inf \{k\in\mn: W_1\dots
W_k< e^{-x}\}, \ \ x\geq 0.
\end{eqnarray}
%
In this  paper we  treat the cases of finite and infinite $\nu$ in
a unified way, and  obtain the limiting distribution of $K_n$ directly
from the properties of the counting process
\begin{eqnarray}\label{nx}
N^\ast(x)&:=&\#\{k\in\mn: \wwp_k\geq e^{-x}\}\\&=&\#\{k\in\mn:
W_1\cdots W_{k-1}(1-W_k)\geq e^{-x}\},~~~  x>0,
\end{eqnarray}
in the range of small frequencies (large $x$). Although this
approach is familiar from \cite{BarGn, Karlin}, the application to
the Bernoulli sieve is new. We emphasize here  that the connection
between $K_n$ and $N^\ast(x)$ remains veiled unless we consider
the Bernoulli sieve as the occupancy scheme with random
frequencies (a random environment), and the process of occupancy
counts $K_n$ is analyzed conditionally on the environment. Thus we
believe that the present paper offers a natural way to study the
occupancy problem, since the method is based on a direct analysis
of frequencies and calls for generalizations. Our main result is
the following theorem.
\begin{thm}\label{main}
If there exist functions $f: \mr_+\to\mr_+$ and $g:\mr_+\to\mr$
such that $(\rho^\ast(x)-g(x))/f(x)$ converge weakly (as
$x\to\infty$) to some non-degenerate and proper distribution, then
also $(X_n-b_n)/a_n$ converge weakly (as  $n\to\infty$) to the
same distribution, where $X_n$ can be either $K_n$ or $N^\ast(\log
n)$, and the constants are given by
$$b_n=\int_0^{\log n} g(\log
n-y)\,\mmp\{|\log (1-W)|\in {\rm d}y\}, ~~~a_n=f(\log n).$$
\end{thm}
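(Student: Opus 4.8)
The plan is to prove the two assertions ``$X_n=N^\ast(\log n)$'' and ``$X_n=K_n$'' separately, in that order, reducing everything to the hypothesis on the first-passage process $\rho^\ast$. Throughout write $S_0:=0$, $S_k:=\sum_{i=1}^k|\log W_i|$ and $\eta_k:=|\log(1-W_k)|$, so that $-\log\wwp_k=S_{k-1}+\eta_k$ and
$$N^\ast(x)=\sum_{k\ge1}\mathbf 1\{S_{k-1}+\eta_k\le x\},\qquad \rho^\ast(x)=\sum_{k\ge1}\mathbf 1\{S_{k-1}\le x\}.$$
The pairs $(|\log W_k|,\eta_k)$ are i.i.d.\ and, crucially, $\eta_k$ is independent of $S_{k-1}$; this lets me treat $N^\ast$ as the counting function of the perturbed random walk $(S_{k-1}+\eta_k)_{k\ge1}$.

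First I would establish the limit for $N^\ast(\log n)$. Writing $F$ for the distribution function of $\eta:=|\log(1-W)|$ and conditioning on $\mathcal F_{k}:=\sigma(W_1,\dots,W_k)$, the variable $\eta_{k+1}$ is independent of $\mathcal F_k$, so I decompose
$$N^\ast(x)=V(x)+M(x),\qquad V(x):=\sum_{k\ge0}F(x-S_k)=\int_{[0,x]}\rho^\ast(x-y)\,F(\mathrm dy),$$
where $M(x):=N^\ast(x)-V(x)$ is a sum of bounded martingale differences with respect to $(\mathcal F_k)$. The compensator $V$ carries the limit: substituting the hypothesis $\rho^\ast(x-y)=g(x-y)+f(x-y)\,\mathcal Z(1+o(1))$ and using that the weak convergence forces $f$ to be regularly varying (so $f(x-y)/f(x)\to1$ for fixed $y$) while the increments of $g$ over bounded windows are $o(f(x))$, a monotonicity sandwich in the level shows that $(\rho^\ast(x-y)-g(x-y))/f(x)$ tends to the \emph{same} limit $\mathcal Z$ simultaneously for $y$ in any bounded range. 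Integrating against $F$ and cutting off the tail $\{\eta>A\}$ (negligible as $A\to\infty$) then yields $(V(x)-b(x))/f(x)\dod\mathcal Z$ with exactly the stated centering $b(x)=\int_{[0,x]}g(x-y)\,F(\mathrm dy)$.

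It remains to show the martingale part is negligible, $M(x)=o_{\mmp}(f(x))$. Its conditional variance equals $\sum_{k\ge0}F(x-S_k)(1-F(x-S_k))$, which is concentrated on the renewal epochs $S_k$ lying in the transition zone of $F$ just below $x$; bounding this by $\sum_k\min(F(x-S_k),1-F(x-S_k))$ and estimating the mean through the renewal measure of $(S_k)$, I would show it is of smaller order than $f(x)^2$. This is the step I expect to be the main obstacle, and it is exactly where the paper improves on earlier work: when $\nu=\me\,\eta=\infty$ the transition zone of $F$ is wide, classical renewal theory (Blackwell / key renewal, which need a finite mean) is unavailable, and one must instead control the localized renewal mass near the level $x$ directly from the regular-variation structure implied by the convergence of $\rho^\ast$, together with a truncation of the large perturbations $\eta_k$.

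Finally I would pass from $N^\ast(\log n)$ to $K_n$ by the same compensator philosophy applied to the sampling. Poissonizing the number of balls makes the box-occupancy indicators conditionally independent given $(\wwp_k)$ with success probabilities $1-e^{-n\wwp_k}$, whose compensator $\sum_k(1-e^{-n\wwp_k})$ differs from $N^\ast(\log n)=\sum_k\mathbf 1\{n\wwp_k\ge1\}$ only through boxes with $n\wwp_k\asymp1$, i.e.\ with $-\log\wwp_k$ within an $O(1)$ window of $\log n$. The number of such boxes, and the conditional sampling variance, are controlled by the same localized-renewal estimate as above and are $o_{\mmp}(f(\log n))$; a de-Poissonization exploiting the monotonicity of $K_n$ in $n$ removes the auxiliary randomization. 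Combining the three steps gives the common limit for both choices of $X_n$.
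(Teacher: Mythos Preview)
Your architecture coincides with the paper's: write $N^\ast(x)=V(x)+\text{(fluctuation)}$ with compensator $V(x)=\sum_{k\ge0}F(x-S_k)$ (the paper's ``renewal shot-noise'', Lemma~2.4), show $V$ inherits the limit of $\rho^\ast$ after the convolution centering (Theorem~2.6), kill the fluctuation in $L^2$, and handle $K_n$ by Poissonization and de-Poissonization via monotonicity (Steps~1--3 of Section~3). However, you have the relative difficulties inverted.

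\emph{The martingale bound is not the obstacle.} The mean of your conditional variance is $\int_0^x F(x-y)(1-F(x-y))\,{\rm d}U(y)$, where $U$ is the renewal function of $(S_k)$. Your remark that ``key renewal, which needs a finite mean, is unavailable'' conflates $\mu=\me|\log W|$ (the renewal step feeding $U$) with $\nu=\me\eta$ (which governs the integrability of $F(1-F)$). When $\nu=\infty$ the integrand is merely non-integrable; Sgibnev's extension of the key renewal theorem still gives $\mu^{-1}\int_0^x(1-F(y))\,{\rm d}y$ when $\mu<\infty$, and a minor variant gives $o\!\left(\int_0^x(1-F)\right)$ when $\mu=\infty$. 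Either way the variance is $o(x)$, and since the admissible $f$ in the nonzero-centering regime satisfy $f(x)^2\gtrsim x$, Chebyshev finishes. No appeal to ``the regular-variation structure implied by the convergence of $\rho^\ast$'' is needed.

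\emph{The compensator tail is where the real work is.} Your sandwich handles $\int_0^A$, but the assertion that $\int_A^x\bigl(\rho^\ast(x-y)-g(x-y)\bigr)f(x)^{-1}\,{\rm d}F(y)$ is ``negligible as $A\to\infty$'' is exactly the nontrivial step, and it does not follow from the one-dimensional hypothesis: over $[A,x]$ the integrand can be large and of either sign. The paper bounds this term between $(F(x)-F(A))\inf_{y\in[0,x]}(\rho^\ast(y)-g(y))/f(x)$ and the matching supremum, and then invokes \emph{functional} limit theorems in the $M_1$ topology (Bingham for $\alpha\in(1,2]$, de~Haan--Resnick for $\alpha=1$) to deduce that the sup and inf converge in law to the extrema of a stable process on $[0,1]$, hence are tight; multiplying by $1-F(A)\to0$ kills the tail. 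This is the step you gloss over.

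For $K_n$ the paper does not recycle the $N^\ast$ estimate. It shows, via the key renewal theorem applied to the directly Riemann integrable function $t\mapsto\varphi(e^t)-\varphi(2e^t)$ with $\varphi(s)=\me\,e^{-s(1-W)}$, that $\me\,{\rm Var}\bigl(K(t)\mid(P_k)\bigr)\to(\log 2)/\mu$, and similarly that $R^\ast(t)-N^\ast(\log t)$ has bounded expectation. Both corrections are $O_{\mmp}(1)$, stronger than the $o_{\mmp}(f(\log n))$ you anticipate, and they come from separate (and easier) renewal computations.
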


\noindent
In more detail, we have the following characterization of possible limits.
\begin{cor}\label{main2}
The assumption of {\rm ~Theorem ~\ref{main}~} holds iff either the
distribution of $|\log W|$ belongs to the domain of attraction of
a stable law, or the function $\mmp\{|\log W|>x\}$ slowly varies
at $\infty$. Accordingly, there are five possible types of
convergence:
\begin{enumerate}
\item[\rm (a)]
If $\sigma^2<\infty$ then, with
\begin{equation}\label{26}
b_n=\mu^{-1}\bigg(\log n-\int_0^{\log n}\mmp\{|\log (1-W)|>x\}{\rm
d}x\bigg)
\end{equation}
and $a_n=(\mu^{-3}\sigma^2\log n)^{1/2}$, the limiting
distribution of $(K_n-b_n)/a_n$ is standard normal.
\item[\rm (b)]
If $\sigma^2=\infty$, and
$$\int_0^x y^2\, \mmp\{|\log W|\in {\rm d}y\} \ \sim \ L(x) \ \ x\to \infty,$$ for some $L$ slowly varying at $\infty$,
then, with $b_n$ given in \eqref{26} and $a_n=\mu^{-3/2}c_{[\log
n]}$, where $(c_n)$ is any positive sequence satisfying
$\lin\,nL(c_n)/c_n^2=1$, the limiting distribution of  $(K_n-b_n)/a_n$ is standard
normal.
\item[\rm (c)]
If
\begin{equation}\label{domain1}
\mmp\{|\log W|>x \} \ \sim \ x^{-\alpha}L(x), \ \ x\to \infty,
\end{equation}
for some $L$ slowly varying at $\infty$ and $\alpha\in (1,2)$
then, with $b_n$ given in \eqref{26} and
$a_n=\mu^{-(\alpha+1)/\alpha}c_{[\log n]}$, where $(c_n)$ is any
positive sequence satisfying $\lin \,nL(c_n)/c_n^\alpha=1$, the
limiting distribution of  $(K_n-b_n)/a_n$ is $\alpha$-stable with characteristic
function
$$t\mapsto \exp\{-|t|^\alpha
\Gamma(1-\alpha)(\cos(\pi\alpha/2)+i\sin(\pi\alpha/2)\, {\rm
sgn}(t))\}, \ t\in\mr.$$

\item[{\rm (d)}] Assume that the
relation \eqref{domain1} holds with $\alpha=1$. Let $r:\mr \to\mr$
be any nondecreasing function such that $\lix x\mmp\{|\log
W|>r(x)\}=1$ and set $$m(x):=\int_0^x \mmp\{|\log W|>y\}{\rm d}y,
\ \ x>0.$$ Then, with $$b_n:=\int_0^{\log n}{\log n-y\over
m(r((\log n-y)/m(\log n-y)))}\,\mmp\bigg\{\bigg|\log(1-W)\bigg|\in
{\rm d}y \bigg\}$$ and $$a_n:={r(\log n/m(\log n))\over m(\log
n)},$$ the limiting distribution of  $(K_n-b_n)/a_n$ is $1$-stable with characteristic
function
\begin{equation*}\label{stable}
t\mapsto \exp\{-|t|(\pi/2-i\log|t|\,{\rm sgn}(t))\}, \ t\in\mr.
\end{equation*}

\item[{\rm (e)}] If the relation \eqref{domain1} holds for $\alpha \in [0,1)$
then, with $b_n\equiv 0$ and $a_n:=\log^\alpha n/L(\log n)$, the
limiting distribution of  $K_n/a_n$ is the Mittag-Leffler law $\theta_\alpha$ with moments

$$\int_0^\infty x^k \ \theta_\alpha({\rm d}x)\ = \ {k!\over
\Gamma^k(1-\alpha)\Gamma(1+\alpha k)}, \ \ k\in\mn.$$
\end{enumerate}
\end{cor}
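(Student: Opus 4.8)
The plan is to recognize $\rho^\ast(x)$ as a first-passage time and reduce the whole statement to the classical fluctuation theory of random walks, reading off the centering and scaling in each regime and then feeding them into Theorem \ref{main}. Since $W_i\in(0,1)$, the event $\{W_1\cdots W_k<e^{-x}\}$ coincides with $\{S_k>x\}$, where $S_k:=\sum_{i=1}^k|\log W_i|$ is a zero-delayed random walk with positive i.i.d.\ increments of mean $\mu$ and variance $\sigma^2$; hence
$$\rho^\ast(x)=\inf\{k\in\mn:S_k>x\}.$$
The weak convergence of $(\rho^\ast(x)-g(x))/f(x)$ is therefore controlled by the fluctuations of $(S_k)$ through the first-passage duality $\{\rho^\ast(x)>k\}=\{S_k\le x\}$, which I intend to exploit in both directions.

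For the characterization (the ``iff''), I would argue that $(S_k)$ admits an affine weak limit precisely when $|\log W|$ lies in the domain of attraction of a stable law of index $\alpha\in(0,2]$ (Gnedenko--Kolmogorov), and that each such limit transfers to $\rho^\ast$ by inversion; this produces cases (a)--(d). The only remaining possibility is $\mu=\infty$ with $(S_k)$ having no affine limit, which corresponds exactly to a slowly varying tail ($\alpha=0$); here $\rho^\ast(x)$ nonetheless converges, after scaling by $\log^\alpha n/L(\log n)$, to the Mittag--Leffler law by a Dynkin--Lamperti/Darling--Kac argument, giving case (e). Sufficiency of the dichotomy is immediate from inversion; the necessity direction is the more delicate part, since one must show that convergence of $\rho^\ast$ forces regular variation of the renewal function, hence of $\mmp\{|\log W|>x\}$, ruling out any $W$ outside these two families.

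In each regime I then read off $g$ and $f$ from the corresponding first-passage limit theorem and apply Theorem \ref{main} with $a_n=f(\log n)$ and $b_n=\int_0^{\log n}g(\log n-y)\,\mmp\{|\log(1-W)|\in{\rm d}y\}$. In cases (a)--(c) the centering is linear, $g(x)=x/\mu$: the renewal CLT gives $f(x)=(\mu^{-3}\sigma^2 x)^{1/2}$ for (a), while in (b) and (c) the norming sequence $(c_n)$ enters as stated. For a linear $g$, Fubini reduces $b_n$ to
$$\mu^{-1}\Big(\log n-\int_0^{\log n}\mmp\{|\log(1-W)|>x\}\,{\rm d}x\Big),$$
which is precisely \eqref{26}; and in (e) the absence of centering, $g\equiv0$, yields $b_n\equiv0$. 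The main obstacle is case (d): at $\alpha=1$ the mean sits at the boundary of existence, so the first-passage limit requires the careful centering through the truncated-mean function $m$ and its companion $r$, and the ensuing reduction of $b_n=\int_0^{\log n}g(\log n-y)\,\mmp\{|\log(1-W)|\in{\rm d}y\}$ to the displayed logarithmically corrected form is the technical heart of the corollary.
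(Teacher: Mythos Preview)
Your approach is essentially that of the paper: the corollary follows directly from Theorem~\ref{main} once the possible weak limits and normalizations for the first-passage time $\rho^\ast(x)$ are in hand, and the paper simply invokes \cite[Proposition~A.1]{GINR} (and \cite{Athreya88} for $\alpha=1$) for this classification rather than re-deriving it from Gnedenko--Kolmogorov and inversion.

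Two points of your sketch need tightening. First, your dichotomy is misaligned with the five cases: for $\alpha\in(0,1)$ the law of $|\log W|$ \emph{is} in the domain of attraction of a stable law and $(S_k)$ \emph{does} admit an affine limit, yet $\mu=\infty$ and the resulting limit for $\rho^\ast$ is Mittag--Leffler via Dynkin--Lamperti; thus case~(e) covers all of $\alpha\in[0,1)$, not only the slowly varying endpoint $\alpha=0$. The genuinely exceptional situation (no stable domain of attraction) is the slowly varying tail alone. Second, in case~(d) there is no further ``reduction'' of $b_n$ to carry out: the displayed expression is literally $\int_0^{\log n}g(\log n-y)\,\mmp\{|\log(1-W)|\in{\rm d}y\}$ with $g(x)=x/m(r(x/m(x)))$ substituted in, so the only work there is identifying this $g$ and $f(x)=r(x/m(x))/m(x)$ from the $\alpha=1$ first-passage theory, not simplifying the integral.
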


Define  $I_n$ to be the index of the last occupied box, which is
the value of $k$ satisfying $Q_k<\min(U_1,\dots,U_n)<Q_{k-1}$, and
let $L_n:=I_n-K_n$ be the number of empty boxes with indices not
exceeding $I_n$. From \cite{GINR} we know that the number $L_n$ of
empty boxes is regulated by $\mu$ and $\nu$ via the relation
$\lim_{n\to\infty}{\mathbb E}L_n=\nu/\mu$ (provided at least one
of these is finite), and that the weak asymptotics of $I_n$
coincides with that of $\rho^\ast(\log n)$, i.e.
 $(I_n-b_n)/a_n$ and $(\rho^\ast(\log n)-b_n)/a_n$ have the same
proper and non-degenerate limiting distribution (if any). In
\cite{Gne, GINR} it was shown that under the condition
$\nu<\infty$ the weak asymptotics of $K_n$ coincides with that of
$I_n$, hence with that of $\rho^\ast(\log n)$. That is to say,
when $\nu<\infty$  the way $L_n$ varies does not affect the
asymptotics of $K_n$ through the representation $K_n=I_n-L_n$.
Clearly, this result is a particular case of Theorem \ref{main}
because when $\nu<\infty$
\begin{equation}\label{27}
\lim_{x\to\infty} {g(x)-\int_0^{x} g(x-y)\,\mmp\{|\log (1-W)|\in
{\rm d}y\}\over f(x)}=0
\end{equation}
(see Remark \ref{rema} for the proof).

Theorem \ref{main} says  that in the
case $\nu=\infty$ the asymptotics of $L_n$ may affect the
asymptotics of $K_n$, and this is indeed the case whenever \eqref{27}
fails, hence a two-term centering of $K_n$ is indispensable.
The following example illustrates the phenomenon.

\begin{example}{\rm
Assume that, for some $\gamma\in (0,1/2)$,
$$\mmp\{W>x\}={1 \over 1+|\log(1-x)|^\gamma}, \ \  x\in [0,1).$$ Then

$$\me \log^2 W<\infty \ \ \text{and} \ \ \mmp\{|\log(1-W)|>x\}\sim
x^{-\gamma}~~~{\rm as}~x\to\infty,$$ and in this case, $$a_n= {\rm
const}\log^{1/2}n \ \ \text{and} \ \ b_n=\mu^{-1}(\log
n-(1-\gamma)^{-1}\log^{1-\gamma} n+o(\log^{1-\gamma}n)).$$ Thus we
see that the second term $b_n-\mu^{-1}\log n$ of centering cannot
be ignored. Moreover, one can check that
$$\me L_n\sim {1\over \mu} \sum_{k=1}^n{\me W^k\over k}\
\sim \ b_n-\mu^{-1}\log n \ \sim \ {1\over \mu (1-\gamma)} \log^{1-\gamma}n,$$
which demonstrates the
substantial contribution of $L_n$.
}
\end{example}

We shall make use of the poissonized version of the occupancy
model, in which balls are thrown in boxes at epochs of a unit rate
Poisson process. The variables associated with time $t$ will be
denoted $K(t), R^\ast(t),$  etc. For instance, the expected number
of occupied boxes within time interval $[0,t]$ conditionally given
$(P_k)$ is
$$R^\ast(t)=\sum_{n=0}^\infty (e^{-t}t^n/n!) R^\ast_n=\sum_{k=1}^\infty {\mathbb E}(1-e^{-tP_k}).$$
The advantage of the poissonized model is that given $(P_k)$ occupation of boxes $1,2,\dots$,
as $t$ varies, occurs by independent Poisson processes of intensities $P_1,P_2,\dots$.

The variable $N^*(x)$ is the number of sites on $[0,x]$ visited by
a perturbed random walk with generic components $|\log
W|,|\log(1-W)|$. We shall develop therefore some general renewal
theory for perturbed random walks, which we believe might be of
some independent interest. The approach based on perturbed random
walks is more general than the one exploited in \cite{GINR} and is
well adapted  to treat the cases $\nu<\infty$ and $\nu=\infty$ in
a unified way.

\section{Renewal theory for perturbed random walks}

\subsection{Preliminaries}
Let $(\xi_k, \eta_k)_{k\in\mn}$ be independent copies of a random
vector $(\xi, \eta)$ with arbitrarily dependent components $\xi>
0$ and $\eta\geq 0$. We assume that the law of $\xi$ is
nonlattice, although extension to the lattice case is possible.
For $(S_k)_{k\in\mn_0}$ a random walk with $S_0=0$ and increments
$\xi_k$, the seqeunce $(T_k)_{k\in\mn}$ with
$$T_k:=S_{k-1}+\eta_k, \ \ k\in\mn,$$ is called a {\it perturbed random
walk} (see, for instance, \cite{ArGl}, \cite[Chapter 6]{Gut2009},
\cite{Iks07}). Since $\underset{k\to\infty}{\lim} T_k =\infty$
a.s., there is some finite number
 $$N(x):=\#\{k\in\mn: T_k\leq x\}, \ \ x\geq 0,$$
of sites visited on the interval $[0,x]$.
Let
\begin{equation}\label{Ragain}
R(x):=\sum_{k=0}^\infty\bigg(1-\exp\left(-x e^{-T_k}\right)\bigg), \ \ x\geq 0.
\end{equation}
Our aim is to find conditions for the weak convergence of,
properly normalized and centered, $N(x)$ and $R(x)$, as
$x\to\infty$.

It is natural to compare $N^*(x)$ with the number of renewals
$$\rho(x):=\#\{k\in\mn_0:
S_k\leq x\}\ = \ \inf\{k\in\mn: S_k>x\}, \ \ x\geq 0.$$ In the
case ${\mathbb E}\eta<\infty$ weak convergence of one of the
variables $(\rho(x)-g(x))/f(x)$  and  $(N(x)-g(x))/f(x)$ (with
suitable $f,g$) implies weak convergence of the other to the same
distribution. Our main focus is thus on the cases when the
contribution of the $\eta_k$'s does affect the asymptotics of
$N(x)$. To our knowledge, in the literature questions about the
asymptotics of perturbed random walks were circumvented  by
imposing an appropriate moment condition which allowed reduction
to $(S_k)$ (see, for instance, \cite[Chapter 6]{Gut2009},
\cite{Hi}, \cite[Theorems 2.1 and 2.2]{RaSam}).

Recall the  following easy observation: for $x,y\geq 0$
\begin{equation}\label{impo}
\rho(x+y)-\rho(x)\overset{a.s.}{\leq} \rho^\prime(x,y)\od \rho(y),
\end{equation}
where $\rho^\prime(x,y):=\inf\{k-N(x)\in\mn: S_k-S_{\rho(x)}>y\}$.
Furthermore, $(\rho^\prime(x,y): y\geq 0)$ is independent of
$\rho(x)$ and has the same distribution as  $(\rho(y): y\geq 0)$.

Denote  $U(x)=\me \rho(x)=\sum_{k=0}^\infty\mmp\{S_k\leq x\}$ the
renewal function of $(S_k)$. From  \eqref{impo} and Fekete's lemma
we have
\begin{equation}\label{estim}
U(x+y)-U(y)\leq C_1y+C_2, \ \ x,y\geq 0,
\end{equation}
for some positive constants $C_1$ and $C_2$.

The next lemma will be used in the proof of Theorem
\ref{28}.
\begin{lemma}\label{con}
If ${\rho(x)-g(x)\over f(x)}$ weakly converges then
\begin{equation}\label{22}
\lix {g(x)-g(x-y)\over f(x)}=0 \ \ \text{locally uniformly in} \
y,
\end{equation}
and, for every $\lambda\in {\mathbb R}$,
\begin{equation}\label{30}
\lix {\int_0^{x} g(x-y)\,{\rm
d}G(y)-\int_0^{x+\lambda} g(x+\lambda-y)\,{\rm d}G(y)\over f(x)}=0,
\end{equation}
for arbitrary distribution function $G$ with $G(0)=0$.
\end{lemma}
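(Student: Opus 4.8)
The plan is to extract both conclusions from the single hypothesis that $(\rho(x)-g(x))/f(x)$ converges weakly to a nondegenerate proper limit, call it $Z$. The key structural tool is the a.s.\ subadditivity relation \eqref{impo}, which gives, for every fixed $y\geq 0$,
\[
\rho(x+y)-\rho(x)\overset{a.s.}{\leq}\rho'(x,y)\overset{d}{=}\rho(y),
\]
with $\rho'(x,y)$ independent of $\rho(x)$. Dividing through by $f(x)$ and rewriting, I would compare the two centered and scaled quantities $(\rho(x+y)-g(x+y))/f(x)$ and $(\rho(x)-g(x))/f(x)$. Both converge in distribution to the same law (the second one by hypothesis; the first one because $x+y\to\infty$ and, modulo verifying that $f$ and $g$ are regular enough, $f(x+y)/f(x)\to1$). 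The difference of the two normalized variables is therefore tight, in fact $\tp 0$ along subsequences, which will force the deterministic gap $(g(x+y)-g(x))/f(x)$ to stay bounded; combined with the subadditive inequality controlling $\rho(x+y)-\rho(x)$ in probability, this should pin the limit to $0$.

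For \eqref{22} concretely, I would argue that if $(g(x)-g(x-y))/f(x)$ did \emph{not} tend to $0$, then the two weak limits of $(\rho(x)-g(x))/f(x)$ and $(\rho(x-y)-g(x-y))/f(x)$ would be translates of one another by a nonzero shift, contradicting the fact that both equal the law of $Z$ (the hypothesis applied at $x$ and at $x-y$, using $f(x-y)/f(x)\to1$). The increment $\rho(x)-\rho(x-y)$ is $O_{\mmp}(1)$ by \eqref{impo} since it is stochastically dominated by $\rho(y)$, so it contributes nothing after division by $f(x)\to\infty$; hence the shift between the two limit laws must be exactly $\lim (g(x)-g(x-y))/f(x)$, and a nonzero value is impossible. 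Local uniformity in $y$ should follow because the dominating variable $\rho(y)$ has a distribution that is monotone in $y$, so the $O_{\mmp}(1)$ control is uniform over compact $y$-ranges.

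For \eqref{30} I would integrate \eqref{22} against $G$. Writing the numerator of \eqref{30} as
\[
\int_0^{x} g(x-y)\,{\rm d}G(y)-\int_0^{x+\lambda} g(x+\lambda-y)\,{\rm d}G(y),
\]
I would split this into the part where the shift $\lambda$ acts inside the integrand $g(\cdot-y)$ and the part coming from the change in the upper limit of integration. The integrand part is controlled by \eqref{22}: for each fixed $y$, $(g(x+\lambda-y)-g(x-y))/f(x)\to0$, and local uniformity plus a domination argument (using that $G$ is a probability-type distribution function concentrating its mass, together with the boundedness of $g/f$ increments coming from the first part of the lemma) lets me pass the limit under the integral by dominated convergence. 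The boundary part, an integral of $g(x+\lambda-y)/f(x)$ over a shrinking $y$-interval near $x$, is negligible because $g/f$ grows slowly relative to $f$. The main obstacle I anticipate is the dominated-convergence step in \eqref{30}: I must produce a $G$-integrable majorant for $|g(x-y)-g(x+\lambda-y)|/f(x)$ that is uniform in $x$, and this requires upgrading the pointwise, locally uniform statement \eqref{22} to a genuinely uniform (or at least suitably growth-controlled) bound over the whole range $y\in[0,x]$ — the regime where $y$ is comparable to $x$ is exactly where local uniformity says nothing and where the regularity of $f$ and $g$ inherited from the weak-convergence hypothesis must be exploited most carefully.
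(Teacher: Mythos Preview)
Your approach to \eqref{22} is genuinely different from the paper's and, at its core, sound. The paper does \emph{not} argue intrinsically: it invokes the classification of possible limit laws for $\rho(x)$ (Proposition~A.1 of \cite{GINR}), identifies $g$ explicitly in each regime (zero, linear $x/\me\xi$, or the $\alpha=1$ form $x/m(r(x/m(x)))$), and verifies \eqref{22} case by case---in the $\alpha=1$ case by showing $g$ is subadditive and then switching to a monotone $f$-equivalent representative. Your route via \eqref{impo} plus convergence of types avoids the classification entirely. One remark: the regularity you flag as needing separate verification, $f(x-y)/f(x)\to 1$, is not an extra assumption but a consequence of the very same convergence-of-types step. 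From \eqref{impo} you get $(\rho(x)-\rho(x-y))/f(x)\tp 0$, so $\rho(x-y)$ admits two centerings/scalings, $(g(x),f(x))$ and $(g(x-y),f(x-y))$, both yielding the limit $Z$; Khinchin's theorem then forces the scale ratio to $1$ and the centering gap to $0$ simultaneously (using that a proper nondegenerate $Z$ cannot satisfy $AZ+B\overset{d}{=}Z$ unless $A=1$, $B=0$). The local uniformity follows by running the same argument along sequences $y_n\in[0,K]$.

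For \eqref{30}, however, your proposal has a real gap, and you have correctly located it. Local uniformity in \eqref{22} only controls $y$ in compacts; the integral in \eqref{30} runs over $y\in[0,x]$, and when $y$ is of order $x$ you need a bound on $|g(x-y)-g(x+\lambda-y)|/f(x)$ that your argument does not supply. The paper closes this gap precisely through its case analysis: having shown $g$ is subadditive (and, in the $\alpha=1$ case, Lipschitz after choosing a smooth representative), it gets a global increment bound $|g(u+\lambda)-g(u)|\leq C$ independent of $u$, which makes the tail $\int_M^x(\cdots)\,{\rm d}G(y)$ trivially small. Your intrinsic approach yields only the self-neglecting property of $f$ and the local smallness of $g$-increments, which is not enough for the required domination; some structural input on $g$ beyond \eqref{22}---essentially what the paper's case analysis provides---appears unavoidable here.
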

\begin{proof}
For fixed  $f$ call $g_1,g_2$ $f$-equivalent if
 $\lix
{g_1(x)-g_2(x)\over f(x)}=0$.
Clearly \eqref{22} is a property of the class of $f$-equivalent functions $g$.

We refer to the list of  possible limiting laws and corresponding
normalizations for $\rho(x)$ \cite[Proposition A.1]{GINR}.
Relation \eqref{22} trivially holds when $g(x)\equiv 0$. It is
known that $g(x)$ cannot be chosen as zero if the law of $\xi$
belongs to the domain of attraction of the $\alpha$-stable law for
$\alpha\in [1,2]$. It is known that for  $\xi$ in the domain of
attraction of a stable law with  $\alpha\in (1,2]$ one can take
$g(x)=x/\me \xi$ which satisfies \eqref{22}.

Thus the only troublesome  case is the stable domain of attraction
for $\alpha=1$. According to \cite[Theorem 3]{Athreya88}, one can
take $$g(x)={x\over m(r(x/m(x)))},$$ where $m(x):=\int_0^x
\mmp\{\xi>y\}{\rm d}y$, and $r(x)$ is any nondecreasing function
such that $\lix x\mmp\{\xi>r(x)\}=1$. The concavity of $m(x)$
implies that $x\mapsto x/m(x)$ is nondecreasing. Thus $x\mapsto
m(r(x/m(x)))$ is nondecreasing too as  superposition of three
nondecreasing functions. Hence, for every $\gamma\in (0,1)$,
$$g(\gamma x)\geq \gamma g(x), \ \ x>0,$$ which readily implies
subadditivity via
$$g(x)+g(z)\geq \bigg({x\over x+z}+{z\over x+z}\bigg)
g(x+z)=g(x+z).$$ Thus,
$$\underset{x\to\infty}{\lim\sup}\,{g(x)-g(x-y)\over f(x)}\leq 0.$$
For the converse inequality for the lower limit it is enough to
choose non-increasing $g$ from the $f$-equivalence class, and by
\cite[Theorem 2]{Athreya88} this indeed can be done by taking
inverse function to $x\mapsto xm(r(x))$.

The stated uniformity of convergence is checked along the same
lines, and \eqref{30} follows from the subadditivity of $b$ and
easy estimates.
\end{proof}

\subsection{The case without centering}

We start with criteria for the weak convergence of $\rho(x)$ and
$R(x)$ in the case when no centering is needed.
\begin{thm}\label{infinite}
For $Y(x)$ any of the variables $\rho(x)$, $N(x)$ or $R(e^x)$ the
following conditions are equivalent:
\begin{itemize}
\item[\rm (a)]
there exists function $f(x): \mr_+\to\mr_+$ such that, as
$x\to\infty$, $Y(x)/f(x)$ weakly converges to a proper and
non-degenerate law,
\item[\rm (b)]
for some $\alpha\in [0,1)$ and
some function $L$ slowly varying at $\infty$
\begin{equation}\label{111}
\mmp\{\xi>x\}\sim x^{-\alpha}L(x), \ \ x\to\infty.
\end{equation}
\end{itemize}
Furthermore, if \eqref{111} holds then the limiting law is the
Mittag-Leffler distribution $\theta_\alpha$, and
one can take $f(x)=x^\alpha/L(x)$.
\end{thm}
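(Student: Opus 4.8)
The plan is to reduce everything to the ordinary renewal counting process $\rho(x)$, for which the equivalence of (a) and (b), together with the Mittag-Leffler limit and the normalization $f(x)=x^\alpha/L(x)$, is classical: the list of scaling limits for $\rho(x)$ recorded in \cite[Proposition A.1]{GINR} already supplies both directions in the no-centering regime, and the law $\theta_\alpha$ arises as the one-dimensional marginal of the inverse of the $\alpha$-stable subordinator that governs $(S_k)$ when \eqref{111} holds (equivalently, $\theta_\alpha$ is determined by the stated moments, so convergence may be checked by the method of moments). I would first dispose of $\rho(x)$, fix $f(x)=x^\alpha/L(x)$, and record that the renewal function satisfies $U(x)=\me\,\rho(x)\sim c_\alpha f(x)$ for a constant $c_\alpha>0$.

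The heart of the argument is the transfer from $\rho(x)$ to $N(x)$. Since $\eta_k\geq0$ forces $T_k\geq S_{k-1}$, we have the a.s. inequality $N(x)\leq\rho(x)$. For a matching lower bound I would estimate the nonnegative gap, for $M>0$,
$$\rho(x)-N(x)\leq\#\{k\geq1:S_{k-1}\leq x-M,\ \eta_k>M\}+(\rho(x)-\rho(x-M)).$$
The decisive observation is that $S_{k-1}$ is a function of $(\xi_1,\eta_1),\dots,(\xi_{k-1},\eta_{k-1})$ and is therefore independent of $\eta_k$, so the expectation of the first term equals $\mmp\{\eta>M\}\,U(x-M)$ \emph{despite} the dependence of $\xi$ and $\eta$, while the expectation of the second is $U(x)-U(x-M)\leq C_1M+C_2$ by \eqref{estim}. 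Dividing by $f(x)$, using $U(x-M)\sim c_\alpha f(x)$, and letting first $x\to\infty$ and then $M\to\infty$ yields $\me(\rho(x)-N(x))/f(x)\to0$, hence $(\rho(x)-N(x))/f(x)\tp0$. Slutsky's lemma then gives $N(x)/f(x)$ the same Mittag-Leffler limit as $\rho(x)/f(x)$.

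To pass from $N(x)$ to $R(e^x)=\sum_k(1-\exp(-e^{x-T_k}))$ I would use a smoothing sandwich. For any $h>0$ the pointwise bounds $1-e^{-e^{x-T_k}}\geq(1-e^{-e^h})\mathbf{1}\{T_k\leq x-h\}$ and $1-e^{-e^{x-T_k}}\leq\mathbf{1}\{T_k\leq x+h\}+e^{x-T_k}\mathbf{1}\{T_k>x+h\}$ give
$$(1-e^{-e^h})\,N(x-h)\leq R(e^x)\leq N(x+h)+\sum_{k:\,T_k>x+h}e^{x-T_k}.$$
Writing $V$ for the renewal function $\sum_k\mmp\{T_k\leq\cdot\}$ of the perturbed walk and using $V\leq U$ together with \eqref{estim}, the tail term has expectation at most a constant multiple of $e^{-h}$ uniformly in $x$, so it is $o(f(x))$ for each fixed $h$; since $f$ is regularly varying, $N(x\pm h)/f(x)$ shares the limit of $N(x)/f(x)$. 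Letting $x\to\infty$ and then $h\to\infty$ (so that $1-e^{-e^h}\to1$) squeezes $R(e^x)/f(x)$ to the same law. For the converses, the a.s. comparison $N\leq\rho$ and the smoothing sandwich show that a proper nondegenerate scaling limit for $N$ or $R$ forces one for $\rho$, whence \eqref{111} follows from the classical converse for ordinary renewal processes.

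The step I expect to be the main obstacle is the control of $R(e^x)$: the kernel $1-e^{-e^{x-T_k}}$ spreads mass across a neighbourhood of level $x$, and making the boundary-layer estimate on $\sum_{T_k>x+h}e^{x-T_k}$ uniform in $x$ requires a renewal-measure bound for the \emph{perturbed} walk, which I would obtain by dominating $V$ by $U$ and invoking \eqref{estim}. A secondary difficulty is the converse transfer from (a) for $N$ or $R$ to (b), where one must preclude a nondegenerate scaling limit of the perturbed or smoothed count in the absence of regular variation; this reduces, via the same comparison inequalities, to the known converse for $\rho$.
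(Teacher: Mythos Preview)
Your forward direction is essentially the paper's argument in a slightly different packaging. The paper proves the \emph{unconditional} ratio statements $N(x)/\rho(x)\to 1$ and $R(x)/\rho(\log x)\to 1$ in probability, using exactly the sandwich
\[
\rho(x-y)-\sum_{j=1}^{\rho(x)}1_{\{\eta_j>y\}}\ \leq\ N(x)\ \leq\ \rho(x),
\]
the SLLN for the indicator sum, and $\rho(x-y)/\rho(x)\to 1$. Your first-moment version (taking expectations and using $U(x)\sim c_\alpha f(x)$) reaches the same conclusion in the direction (b)$\Rightarrow$(a), and your smoothing sandwich for $R$ is a tidy alternative to the paper's integral representation \eqref{25}; your claim that the tail $\sum_{T_k>x+h}e^{x-T_k}$ has expectation $O(e^{-h})$ uniformly in $x$ is indeed correct once one integrates over $s$ first and uses \eqref{estim} on the shifted renewal measure.

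The genuine gap is in your converse. The comparison $N\leq\rho$ alone only gives lower tightness of $\rho(x)/f(x)$; for upper tightness you fall back on the bound for $\rho-N$, but your expectation estimate $\me(\rho(x)-N(x))\leq \mmp\{\eta>M\}\,U(x-M)+C_1M+C_2$ invokes $U(x)\sim c_\alpha f(x)$, which is precisely the conclusion (b) you are trying to derive. The same circularity appears if you divide the a.s.\ bound by $f(x)$, since the indicator sum contributes a factor $\rho(x)/f(x)$ that you do not yet control. The remedy is exactly what the paper does: divide the sandwich by $\rho(x)$ rather than by $f(x)$, use the SLLN for $\rho(x)^{-1}\sum_{j\leq\rho(x)}1_{\{\eta_j>M\}}\to\mmp\{\eta>M\}$ a.s., and conclude $N(x)/\rho(x)\to 1$ in probability \emph{without any tail hypothesis on $\xi$}. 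Once that ratio is established, a nondegenerate scaling limit for $N$ transfers to $\rho$ by Slutsky, and (b) follows from the classical converse for $\rho$; the same ratio argument combined with your smoothing sandwich closes the converse for $R$.
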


The assertion of Theorem \ref{infinite} regarding $\rho(x)$
follows from \cite[Appendix]{GINR}. For the other two variables
the result is a consequence of the following lemma.
\begin{lemma}
We have
$$\lix \,{N(x)\over \rho(x)}=1 \ \ \text{in
probability}$$ and $$\lix\, {R(x)\over \rho(\log x)}=1 \ \
\text{in probability}.$$
\end{lemma}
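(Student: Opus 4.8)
The plan is to establish the two probability-limit statements separately, both resting on the comparison between the perturbed random walk $(T_k)$ and the underlying ordinary random walk $(S_k)$, exploiting that the perturbations $\eta_k$ enter only through a single-index shift. First I would prove $N(x)/\rho(x)\tp 1$. Since $T_k=S_{k-1}+\eta_k$ and $\eta_k\geq 0$, every site counted by $N(x)$ satisfies $S_{k-1}\leq T_k\leq x$, giving the easy one-sided bound $N(x)\overset{a.s.}{\leq}\rho(x)+1$; the genuine content is the reverse, namely that only $o(\rho(x))$ of the first $\rho(x)$ sites of $(S_k)$ are pushed past $x$ by their perturbation $\eta_k$. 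To control the discrepancy $\rho(x)-N(x)$, I would count indices $k$ with $S_{k-1}\leq x$ but $S_{k-1}+\eta_k>x$, i.e. indices lying within distance $\eta_k$ of the barrier $x$; using the independence of $(\eta_k)$ from the walk together with the renewal estimate \eqref{estim}, the expected number of such indices is governed by $\me[\,U(x)-U(x-\eta)\,]\leq \me[C_1\eta+C_2]$, which is $O(1)$ when $\me\eta<\infty$ but must be handled more carefully here since $\nu=\me\eta$ may be infinite.

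The main obstacle is exactly this last point: in the regime of Theorem \ref{infinite}, condition \eqref{111} forces $\rho(x)/f(x)$ to converge to a nondegenerate Mittag-Leffler limit with $f(x)=x^\alpha/L(x)$ growing \emph{sublinearly}, so $\rho(x)$ itself is of much smaller order than in the classical $\me\xi<\infty$ case, and I cannot simply absorb the boundary correction into a bounded additive error. The cleaner route is therefore to work with the ratio directly. I would fix $\epsilon>0$ and write, for the lower bound, $N(x)\geq \rho((1-\delta)x)-\#\{k\leq \rho((1-\delta)x): \eta_k>\delta x\}$, so that a site counted by $\rho$ at level $(1-\delta)x$ fails to be counted by $N$ only if its perturbation exceeds $\delta x$. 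The subtracted term is stochastically dominated by a sum of $\rho((1-\delta)x)$ indicators $\mathbf{1}\{\eta_k>\delta x\}$, independent of the walk; conditionally on $\rho((1-\delta)x)=m$ its mean is $m\,\mmp\{\eta>\delta x\}$, which tends to $0$ relative to $m$ as $x\to\infty$ provided $\mmp\{\eta>\delta x\}\to 0$ — and this holds for \emph{any} proper $\eta$, with no moment assumption. Combining this with the slow variation of $f$, which gives $\rho((1-\delta)x)/\rho(x)\tp (1-\delta)^\alpha$ and hence $\to 1$ as $\delta\downarrow 0$, I obtain $N(x)/\rho(x)\tp 1$.

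For the second assertion I would reduce $R(x)$ to $N(\log x)$ and then invoke the part just proved. The natural comparison is between the smooth exponential counting functional $R(e^x)=\sum_{k\geq 0}(1-\exp(-e^{x}e^{-T_k}))=\sum_{k\geq 0}(1-\exp(-e^{-(T_k-x)}))$ and the sharp count $N(x)=\#\{k: T_k\leq x\}$. The summand $1-\exp(-e^{-(T_k-x)})$ is close to $1$ when $T_k\ll x$ and close to $0$ when $T_k\gg x$, with the transition confined to a window of width $O(1)$ around $T_k=x$; thus $R(e^x)$ and $N(x)$ differ only by the contribution of sites $T_k$ within an $O(1)$-neighbourhood of the barrier. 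I would bound $|R(e^x)-N(x)|$ above by a constant times $N(x+A)-N(x-A)$ plus an exponentially small tail, for a large constant $A$ absorbing the smooth-transition window, and argue that $N(x+A)-N(x-A)=o(N(x))=o(\rho(x))$ in probability — again using slow variation of $f$ to see that increments of $\rho$ over a fixed-length window are negligible relative to $\rho(x)$ itself, with the discrepancy between these increments for $N$ and for $\rho$ controlled exactly as in the first part. This yields $R(e^x)/N(x)\tp 1$, and composing with $N(x)/\rho(x)\tp 1$ gives $R(e^x)/\rho(x)\tp 1$, equivalently $R(x)/\rho(\log x)\tp 1$ after the substitution $x\mapsto e^x$.
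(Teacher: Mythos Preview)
Your overall strategy for the first limit is sound and the sandwich argument is the right idea, but the key step ``$\rho((1-\delta)x)/\rho(x)\tp(1-\delta)^\alpha$'' is incorrect as stated. Marginal convergence $\rho(x)/f(x)\Rightarrow Z$ does not yield convergence of this ratio to a constant; under \eqref{111} one has the functional convergence $(\rho(tx)/f(x))_{t\geq 0}\Rightarrow (Z(t))_{t\geq 0}$ to an inverse stable subordinator, and then $\rho((1-\delta)x)/\rho(x)\Rightarrow Z(1-\delta)/Z(1)$, a genuine random variable in $[0,1]$ rather than $(1-\delta)^\alpha$. The argument can still be rescued, since $Z(1-\delta)/Z(1)\to 1$ almost surely as $\delta\downarrow 0$, but this route invokes a functional limit theorem and, more importantly, uses assumption \eqref{111}, which is \emph{not} a hypothesis of the lemma: the lemma is stated (and needed) in full generality.

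The paper sidesteps both issues by taking a \emph{fixed} truncation level $y$ in place of your $\delta x$. From the sandwich
\[
\rho(x-y)-\sum_{j=1}^{\rho(x)}1_{\{\eta_j>y\}}\ \leq\ N(x)\ \leq\ \rho(x),
\]
one gets $\rho(x-y)/\rho(x)\tp 1$ directly from \eqref{impo} (the difference $\rho(x)-\rho(x-y)$ is stochastically dominated by the fixed finite random variable $\rho(y)$, while $\rho(x)\to\infty$ a.s.), and the strong law gives $\rho(x)^{-1}\sum_{j\leq\rho(x)}1_{\{\eta_j>y\}}\to\mmp\{\eta>y\}$ a.s.; letting first $x\to\infty$ and then $y\to\infty$ finishes the argument with no distributional assumption on $\xi$ whatsoever.

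For the second part your pointwise comparison of the summands in $R(e^x)$ and $N(x)$ is correct in spirit and can be pushed through, but ``exponentially small tail'' is too quick: the contribution of indices $k$ with $S_{k-1}\leq x$ but $T_k>x+A$ (i.e.\ large $\eta_k$) is not uniformly small and must be handled separately, which brings you back to the counting argument of the first part. The paper instead passes through the integral representation
\[
R(x)=\int_0^x N(\log x-\log y)\,e^{-y}\,{\rm d}y-(1-e^{-x})N(0)
\]
and bounds the integral from below by $N(\log x-\log a)(1-e^{-a})$ and from above by $\rho(\log x)(1-e^{-x})$ plus a remainder of bounded expectation (via \eqref{estim}), then lets $a\to\infty$. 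This again uses only \eqref{impo} and \eqref{estim} and no regular-variation hypothesis.
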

\begin{proof}
By definition of the perturbed random walk
\begin{equation}\label{5}
\rho(x-y)-\sum_{j=1}^{\rho(x)}1_{\{\eta_j>y\}}\leq N(x)
\leq \rho(x)
\end{equation}
for $0<y<x$.
Clearly, $\rho(x)\uparrow\infty$ a.s. and
\begin{equation}\label{23}
\rho(x-y)
\geq  \rho(x)-\rho^\prime(x-y, y) \ \ \text{a.s.}
\end{equation}
with $\rho^\prime$ as in \eqref{impo}, from which
\begin{equation}\label{88}
{\rho(x-y)\over \rho(x)} \ \tp \ 1, \ \ x\to\infty.
\end{equation}
Finally, by the strong law of large numbers we have
$$\lix {\sum_{j=1}^{\rho(x)}1_{\{\eta_j> y\}}\over
\rho(x)}=\mmp\{\eta> y\} \ \ \text{a.s.}$$ Therefore, dividing
\eqref{5} by $\rho(x)$ and letting first $x\to\infty$ and then
$y\to\infty$ we obtain the first part of the lemma.

As for the second part, we use the representation
\begin{eqnarray}\label{25}
R(x)&=&\int_1^\infty (1-e^{-x/y}){\rm d}\,N(\log y)\nonumber\\
&=&\int_0^x N(\log x-\log y)e^{-y}{\rm d}y-(1-e^{-x})N(0).
\end{eqnarray}
Since $N(x)$ is a.s.\, non-decreasing in $x$ we have, for any
$a<x$,
$$\int_0^x
N(\log x-\log y)e^{-y}{\rm d}y\geq \int_0^a N(\log x-\log
y)e^{-y}{\rm d}y\geq N(\log x-\log a) (1-e^{-a}).$$ Dividing this
inequality by $\rho (\log x)$, sending $x\to\infty$ along with
using \eqref{88} and the already established part of the lemma,
and finally letting $a\to\infty$, we obtain the half of desired
conclusion.

To get the other half, write
\begin{eqnarray}\label{7}
\int_0^x N(\log x-\log y)e^{-y}{\rm d}y&\overset{a.s.}{\leq} &
\rho(\log x)(1-e^{-x})\\&+&\int_0^1 (\rho(\log x-\log y)-\rho(\log
x)) e^{-y}{\rm d}y,\nonumber
\end{eqnarray}
where \eqref{impo}, the inequality $N(x)\leq \rho(x)$ a.s., and
the fact that $\rho(y)$ is a.s. non-decreasing in $y$ have been
used. Since, by \eqref{estim}, $$\me \int_0^1 (\rho(\log x-\log
y)-\rho(\log x)) e^{-y}{\rm d}y \leq \int_0^1 (C_1|\log
y|+C_2)e^{-y}{\rm d}y<\infty,$$ then dividing \eqref{7} by
$\rho(\log x)$ and sending $x\to \infty$ completes the proof.
\end{proof}

\subsection{The case with nonzero centering}

Now we turn to a more intricate case when some
centering is needed. We denote $F(x)$ the distribution function of
$\eta$
and  $U(x)$ the renewal function of
$(S_k)$.

We will see that a major part of the variability of $N(x)$ is
absorbed by the {\it renewal shot-noise} process $(M(x): x\geq 0)$,
where
$$M(x):=\sum_{k=0}^{\rho(x)-1}F(x-S_k), ~~~ x\geq 0.$$
\begin{lemma}\label{shot}
We have
$$\me \bigg(N(x)-M(x)\bigg)^2=\int_0^x  F(x-y)(1-F(x-y)){\rm
d}\,U(y),$$ which implies that, as $x\to\infty$,
\begin{equation}\label{19}
\me \bigg(N(x)-M(x)\bigg)^2\ = \ O\bigg(\int_0^x (1-F(y)){\rm
d}y\bigg)=o(x).
\end{equation}
\end{lemma}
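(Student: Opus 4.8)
The plan is to exploit the explicit shot-noise structure: conditionally on the random walk $(S_k)$, the indicators $1_{\{T_k\leq x\}}=1_{\{\eta_k\leq x-S_{k-1}\}}$ are independent Bernoulli variables with success probabilities $F(x-S_{k-1})$, and $N(x)-M(x)$ is exactly the centered sum of these indicators over $k$ with $S_{k-1}\leq x$. The conditional mean of $N(x)$ given $(S_k)$ is precisely $M(x)=\sum_{k=0}^{\rho(x)-1}F(x-S_k)$, so $\me(N(x)-M(x)\mid (S_k))=0$. For the variance, conditional independence gives
$$\me\bigl((N(x)-M(x))^2\mid (S_k)\bigr)=\sum_{k=0}^{\rho(x)-1}F(x-S_k)(1-F(x-S_k)).$$
Taking expectations and applying Fubini together with the identity $\me\sum_{k=0}^\infty h(S_k)1_{\{S_k\leq x\}}=\int_0^x h(x-y)\,{\rm d}\,U(y)$ (valid for the renewal measure $U$ of $(S_k)$) with $h(t)=F(t)(1-F(t))$ yields the claimed exact formula $\int_0^x F(x-y)(1-F(x-y))\,{\rm d}\,U(y)$.

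Next I would establish the asymptotic bound \eqref{19}. Since $F(x-y)(1-F(x-y))\leq 1-F(x-y)$, the exact variance is at most $\int_0^x (1-F(x-y))\,{\rm d}\,U(y)$, so it suffices to control this latter integral. Writing it as a convolution of the tail $1-F$ against the renewal measure, I would slice the interval and use the subadditive estimate \eqref{estim}, namely $U(x+y)-U(y)\leq C_1 y+C_2$, which says the renewal measure puts mass at most of linear order on any subinterval. Concretely, on each unit block $[j,j+1)$ the increment of $U$ is bounded by a constant, so $\int_0^x (1-F(x-y))\,{\rm d}\,U(y)$ is dominated up to constants by $\int_0^x (1-F(y))\,{\rm d}y$, which gives the first equality in \eqref{19}. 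The final bound $\int_0^x(1-F(y))\,{\rm d}y=o(x)$ is then immediate: the integrand $1-F(y)\to 0$ as $y\to\infty$ because $F$ is a proper distribution function, and a Cesàro-type argument shows the average of a function tending to zero is $o(x)$.

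I expect the main obstacle to be making the passage from $\int_0^x (1-F(x-y))\,{\rm d}\,U(y)$ to $O\bigl(\int_0^x (1-F(y))\,{\rm d}y\bigr)$ fully rigorous, since the renewal measure $U$ need not have a bounded density and only the coarse linear bound \eqref{estim} is available. The care lies in summing the block estimates: one writes $\int_0^x (1-F(x-y))\,{\rm d}\,U(y)=\sum_{j}\int_{[j,j+1)}(1-F(x-y))\,{\rm d}\,U(y)$, bounds each block increment of $U$ by a constant via \eqref{estim}, and bounds $1-F(x-y)$ on the block by its value at an endpoint using monotonicity of $1-F$. Reassembling the resulting Riemann-type sum back into $\int_0^x(1-F(y))\,{\rm d}y$ up to a multiplicative constant is the delicate bookkeeping step. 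Everything else—the conditional-independence computation of the variance and the $o(x)$ estimate—is routine.
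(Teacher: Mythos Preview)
Your derivation of the exact variance formula has a genuine gap. You condition on the \emph{entire} random walk $(S_k)_{k\geq 0}$ and assert that, given this, the indicator $1_{\{\eta_k\leq x-S_{k-1}\}}$ is Bernoulli with parameter $F(x-S_{k-1})$, so that $M(x)=\me(N(x)\mid(S_k))$. But the paper explicitly allows the components of $(\xi,\eta)$ to be \emph{arbitrarily dependent}, and in the Bernoulli-sieve application $\xi=|\log W|$ and $\eta=|\log(1-W)|$ are both functions of the same $W$. Conditioning on $(S_k)_{k\geq 0}$ is the same as conditioning on $(\xi_k)_{k\geq 1}$, which in general changes the law of $\eta_k$: one gets $\mmp\{\eta_k\leq t\mid(\xi_j)\}=\mmp\{\eta\leq t\mid\xi=\xi_k\}$, not $F(t)$. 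Hence $M(x)$ is \emph{not} the conditional mean of $N(x)$ given $(S_k)$, and the conditional-variance identity you write down is not valid as stated.

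The paper circumvents this by never conditioning on the full walk. Writing $N(x)-M(x)=\sum_{k\geq 0}1_{\{S_k\leq x\}}\bigl(1_{\{S_k+\eta_{k+1}\leq x\}}-F(x-S_k)\bigr)$, it shows the cross terms vanish by conditioning the $(i,j)$-term (for $i<j$) on $(\xi_\ell,\eta_\ell)_{\ell=1}^{j}$: then $\eta_{j+1}$ is independent of everything in sight and has law $F$, so the $j$-th factor has conditional mean zero. For the diagonal terms one only needs that $\eta_{k+1}$ is independent of $S_k$ (which depends on $\xi_1,\dots,\xi_k$ alone), and this is true regardless of the dependence within the pair $(\xi_{k+1},\eta_{k+1})$. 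Your argument is easily repaired along these lines, but as written it is only correct under the extra hypothesis that $\xi$ and $\eta$ are independent.

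Your treatment of the asymptotic bound \eqref{19}, by contrast, is correct and in fact more elementary than the paper's. The paper splits into three cases ($\me\eta$ finite; $\me\eta=\infty$ with $\me\xi<\infty$; both infinite) and invokes the key renewal theorem together with Sgibnev's extension to obtain the precise asymptotic equivalence in each case. You instead use the crude bound $F(1-F)\leq 1-F$, block $[0,x]$ into unit intervals, apply \eqref{estim} to bound each increment of $U$ by a constant, and reassemble into a Riemann sum dominated by a constant multiple of $\int_0^x(1-F(y))\,{\rm d}y$. This gives only the $O(\cdot)$ statement rather than an asymptotic equivalence, but that is exactly what the lemma claims, and it avoids any appeal to renewal-theoretic machinery.
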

\begin{proof}
For integer $i<j$,
\begin{eqnarray*}
&&\me \bigg(1_{\{S_i\leq x\}}(1_{\{S_i+\eta_{i+1}\leq
x\}}-F(x-S_i))1_{\{S_j\leq x\}}(1_{\{S_j+\eta_{j+1}\leq
x\}}-F(x-S_j))\bigg| (\xi_k, \eta_k)_{k=1}^j\bigg)\\
&=& 1_{\{S_i\leq x\}}(1_{\{S_i+\eta_{i+1}\leq
x\}}-F(x-S_i))1_{\{S_j\leq x\}}\bigg(F(x-S_j)-F(x-S_j)\bigg)=0.
\end{eqnarray*}
Hence,
\begin{eqnarray*}
 \me \bigg(N(x)-M(x)\bigg)^2&=& \me
\bigg(\sum_{k=0}^\infty 1_{\{S_k\leq
x\}}\bigg(1_{\{S_k+\eta_{k+1}\leq x\}}-F(x-S_k)\bigg) \bigg)^2 \\
&=& \me \sum_{n=0}^\infty 1_{\{S_k\leq
x\}}\bigg(1_{\{S_k+\eta_{k+1}\leq x\}}-F(x-S_k)\bigg)^2\\&=& \me
\sum_{k=0}^\infty 1_{\{S_k\leq
x\}}\bigg(F(x-S_k)-F^2(x-S_k)\bigg)\\ &=& \int_0^x
F(x-y)(1-F(x-y)){\rm d}\,U(y).
\end{eqnarray*}


If
$\me \eta<\infty$, then by the key renewal theorem, as $x\to\infty$,
$$\lix \me \bigg(N(x)-M(x)\bigg)^2\ = \ a^{-1} \int_0^\infty F(y)(1-F(y)){\rm
d}y<\infty,$$ where $a:=\me \xi$ may be finite or infinite.
If  $\me \eta=\infty$ and $a<\infty$, a generalization of the key
renewal theorem due to Sgibnev \cite[Theorem 4]{Sgib} yields
$$\me \bigg(N(x)-M(x)\bigg)^2\ \sim \ a^{-1} \int_0^x (1-F(y)){\rm
d}y.$$
Finally, if $\me \eta=\infty$ and $a=\infty$ a modification of
Sgibnev's proof yields
$$\me \bigg(N^*(x)-M(x)\bigg)^2\ =
\ o\bigg(\int_0^x (1-F(y)){\rm d}y\bigg).$$
Thus \eqref{19} follows in any case.
\end{proof}

\begin{thm}\label{28}
If for some random variable $Z$
\begin{equation}\label{1111}
{\rho(x)-g(x)\over f(x)} \ \dod \ Z, ~~~ x\to\infty,
\end{equation}
then also
\begin{equation}\label{20}
{M(x)-\int_0^x g(x-y){\rm d}\,F(y)\over f(x)} \ \dod \ Z, ~~~
x\to\infty,
\end{equation}
\begin{equation}\label{21}
{N(x)-\int_0^x g(x-y){\rm d}\,F(y)\over f(x)} \ \dod \ Z, ~~~
x\to\infty,
\end{equation}
and
\begin{equation}\label{24}
{R(x)-\int_0^{\log x} g(\log x-y){\rm d}\,F(y)\over f(\log x)} \
\dod \ Z, ~~~
x\to\infty.
\end{equation}
\end{thm}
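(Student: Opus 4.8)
The plan is to prove \eqref{20} first and then to derive \eqref{21} and \eqref{24} from it. The computation behind everything is the convolution identity
$$M(x)=\int_0^x \rho(x-y)\,{\rm d}F(y),$$
obtained by writing $F(x-S_k)=\int_{[0,\,x-S_k]}{\rm d}F(y)$ in the definition of $M$ and interchanging summation and integration; equivalently $M(x)=\me\big[\rho(x-\tilde\eta)1_{\{\tilde\eta\le x\}}\mid(\xi_k)\big]$ for an independent copy $\tilde\eta\od\eta$. Thus $\me M(x)=\int_0^x U(x-y)\,{\rm d}F(y)$, and with $\zeta(z):=\rho(z)-U(z)$ one has the exact identity
$$M(x)-\int_0^x g(x-y)\,{\rm d}F(y)=\zeta(x)F(x)+\int_0^x\big(\zeta(x-y)-\zeta(x)\big)\,{\rm d}F(y)+\int_0^x\big(U(x-y)-g(x-y)\big)\,{\rm d}F(y).$$
So \eqref{20} reduces to showing that, after division by $f(x)$, the first term converges to $Z$ while the other two vanish.

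By Lemma \ref{con} and the renewal-theoretic description of the admissible pairs $(f,g)$, the given $g$ is $f$-equivalent to the renewal function $U$, i.e. $\lix(U(x)-g(x))/f(x)=0$; in particular $(\rho(x)-U(x))/f(x)\dod Z$, so that $\zeta(x)F(x)/f(x)\dod Z$ by Slutsky (using $F(x)\to1$). For the last, deterministic term I would split $\int_0^x=\int_0^{x-T}+\int_{x-T}^x$ at a fixed level $T$: writing the integrand as $\frac{U(x-y)-g(x-y)}{f(x-y)}\frac{f(x-y)}{f(x)}$, on $y\le x-T$ the first factor is uniformly small by $f$-equivalence while the second is $\le1$, and on $y>x-T$ the mass $F(x)-F(x-T)\to0$ and $f(x-y)/f(x)\to0$ (here $f$ is taken nondecreasing and regularly varying); letting $x\to\infty$ and then $T\to\infty$ gives $o(f(x))$.

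The middle, random term is the heart of the matter. Since $\zeta(x-y)-\zeta(x)=-(\rho(x)-\rho(x-y))+(U(x)-U(x-y))$ is centred for each $y$, I again split at a fixed $T$. On the bulk $y\le T$ the increment satisfies $0\le\rho(x)-\rho(x-y)\le\rho(x)-\rho(x-T)=O_{\mmp}(1)$ and $U(x)-U(x-T)=O(1)$ by \eqref{estim}, so this part is $o_{\mmp}(f(x))$ because $f(x)\to\infty$. On the tail $y>T$ I would use a moment estimate: in the finite-variance cases a second moment, and in the cases $\alpha\in(1,2)$ an $L^1$ bound $\me|\zeta(z)|\le Cf(z)$ (uniform integrability of the normalised fluctuations, valid since then $\me|Z|<\infty$), which yields
$$\frac1{f(x)}\int_T^x\me\big|\zeta(x-y)-\zeta(x)\big|\,{\rm d}F(y)\le \frac{C}{f(x)}\int_T^x\big(f(x-y)+f(x)\big)\,{\rm d}F(y)\le 2C\big(F(x)-F(T)\big),$$
the right-hand side tending to $2C(1-F(T))\to0$ as first $x\to\infty$ and then $T\to\infty$. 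The point that makes the tail harmless even when $\me\eta=\infty$ is that the escaping $F$-mass sits where $x-y$ is still large, so the increments $\rho(x)-\rho(x-y)$ remain of smaller order than $f(x)$.

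The main obstacle is this tail estimate in the boundary case $\alpha=1$ (part (d) of Corollary \ref{main2}), where the limit law has no first moment, so neither the $L^2$ nor the $L^1$ bound above is available, and an $L^p$ bound with $p<1$ does not survive discretisation of the $F$-integral. For this case I expect to argue at the process level: the normalised renewal process $\{(\rho(xt)-U(xt))/f(x):t\ge0\}$ converges to a stable process, $(M(x)-\me M(x))/f(x)$ is its average against the law of $\eta/x$, and since $\eta/x\tp0$ the averaging functional is continuous at the limit, with tightness replacing the missing moment bound and \eqref{impo} together with the explicit subadditive $g$ from Lemma \ref{con} controlling the contribution near $t=1$. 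Granting \eqref{20}, relation \eqref{21} follows from Lemma \ref{shot} and Chebyshev's inequality, since $\me(N(x)-M(x))^2=o(f(x)^2)$ (because $\int_0^x(1-F(y)){\rm d}y=o(x)$ by \eqref{19} while $f(x)^2$ is at least of order $x$ in all cases covered), so $(N(x)-M(x))/f(x)\tp0$ and Slutsky applies. Finally \eqref{24} follows by feeding \eqref{25}, which presents $R(x)$ as the average of $N(\log x-\log y)$ against the light-tailed kernel $e^{-y}{\rm d}y$, into the same smoothing argument with this bounded-scale kernel in place of $F$ (the bulk/tail analysis now being trivial), the centring being matched through \eqref{30}.
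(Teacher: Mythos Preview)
Your derivations of \eqref{21} from \eqref{20} via Lemma~\ref{shot}, and of \eqref{24} from \eqref{21} via \eqref{25}, match the paper. The starting identity $M(x)=\int_0^x\rho(x-y)\,{\rm d}F(y)$ (up to the boundary term $-F(x)$) is also the paper's starting point. The difference, and the source of the gap, is in how you handle this integral.

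Your decomposition introduces the renewal function $U$ as an intermediate centring: you write $M-\int g\,{\rm d}F$ as $\zeta(x)F(x)$ plus a random increment term plus a deterministic $(U-g)$ term. This buys you nothing and costs you two unproved claims. First, the assertion that $U$ is $f$-equivalent to $g$ is not in Lemma~\ref{con} and is not obvious in the $\alpha=1$ case (convergence in distribution of $(\rho-g)/f$ does not give convergence of the mean). Second, the $L^1$ bound $\me|\zeta(z)|\le Cf(z)$ you invoke for $\alpha\in(1,2)$ is not a consequence of convergence in distribution either; uniform integrability would have to be established separately, and you do not do this. So even before the acknowledged $\alpha=1$ obstacle, the argument for the tail portion of the middle term is incomplete.

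The paper sidesteps all of this by never introducing $U$: it works directly with $T(x)=\int_0^x\frac{\rho(x-y)-g(x-y)}{f(x)}\,{\rm d}F(y)$, splits at a fixed level $\delta$, and for the tail $T_2(x)=\int_\delta^x$ uses the crude bound
\[
\Big(\inf_{y\in[0,x]}\frac{\rho(y)-g(y)}{f(x)}\Big)\big(F(x)-F(\delta)\big)\ \le\ T_2(x)\ \le\ \Big(\sup_{y\in[0,x]}\frac{\rho(y)-g(y)}{f(x)}\Big)\big(F(x)-F(\delta)\big).
\]
The point is that the normalised sup and inf converge in distribution to \emph{finite} random variables, by functional convergence of $\{(\rho(xt)-g(xt))/f(x):t\ge0\}$ in $D[0,\infty)$ under the $M_1$-topology to a (possibly drift-perturbed) stable process, together with the continuous mapping theorem for the sup and inf functionals. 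Multiplying by $F(x)-F(\delta)\to 1-F(\delta)$ and then sending $\delta\to\infty$ kills $T_2$ in probability. This is precisely the ``process level'' argument you propose as a rescue for $\alpha=1$; the paper uses it uniformly for all $\alpha\in[1,2]$, replacing your moment estimates entirely. No integrability of $Z$, no $L^p$ bound on $\zeta$, and no comparison of $U$ with $g$ are needed.
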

\begin{proof}
Integrating by parts yields
$$M(x)=\int_0^x F(x-y){\rm d}\rho(y)= -F(x)+\int_{0}^x \rho(x-y){\rm
d}F(y),$$ so to prove \eqref{20} it is enough to show  that, as
$x\to\infty$,
$$T(x):=\int_{0}^x {\rho(x-y)-g(x-y)\over f(x)}\,{\rm d}F(y) \ \dod \
Z.$$ For any fixed $\delta\in (0,x)$
we may decompose $T(x)$ as
 $$T_1(x)+T_2(x):=\int_0^\delta
{\rho(x-y)-g(x-y)\over f(x)}\,{\rm d}F(y)+\int_\delta^x
{\rho(x-y)-g(x-y)\over f(x)}\,{\rm d}F(y).$$
From
the proof of Lemma \ref{con} we know that without loss of
generality it can be assumed that $g(x)$ is nondecreasing. Thus,
almost surely,
\begin{eqnarray*}
{\rho(x)-g(x)\over f(x)}F(\delta)&-&{\rho(x)-\rho(x-\delta)\over
f(x)}F(\delta)\\&\leq& T_1(x)\\&\leq& {\rho(x)-g(x)\over
f(x)}F(\delta)+{g(x)-g(x-\delta)\over f(x)}F(\delta).
\end{eqnarray*}
In view of \eqref{impo} and \eqref{22}, we have the convergence
$\underset{\delta\to\infty}{\lim}\,\lix T_1(x)=Z$ in distribution.

For $x>0$ set $$Z_x(t):={\rho(tx)-g(tx)\over f(x)}, ~~~ t\geq 0$$
and $$\mathcal{Z}_x:=(Z_x(t): t\geq 0).$$ \vspace{.2cm}\noindent

We will establish next that
\begin{equation}\label{102}
{\underset{y\in [0,x]}{\sup}\,(\rho(y)-g(y))\over
f(x)}=\underset{t\in[0,1]}{\sup}\,Z_x(t) \ \dod \ \underset{t\in
[0,1]}{\sup}\,Z(t), \ \ x\to\infty,
\end{equation}
and, similarly,
\begin{equation}\label{103}
{\underset{y\in [0,x]}{\inf}\,(\rho(y)-g(y))\over
f(x)}=\underset{t\in[0,1]}{\inf}\,Z_x(t) \ \dod \ \underset{t\in
[0,1]}{\inf}\,Z(t), \ \ x\to\infty.
\end{equation}

\noindent {\sc Case 1}: $g(x)=x/\me \xi$. Then $Z$ is an
$\alpha$-stable random variable for some $\alpha\in (1,2]$
\cite[Proposition A.1]{GINR}. Denote by $\mathcal{Z}=(Z(t):t\geq
0)$ a stable L\'{e}vy process such that $Z(1)\od Z$. Regard
$\mathcal{Z}_x$ and $\mathcal{Z}$ as random elements of Skorohod's
space $D[0,\infty)$ endowed with the $M_1$-topology.

By \cite[Theorem 1b]{Bing73},
\begin{equation}\label{101}
\mathcal{Z}_x\Rightarrow \mathcal{Z}, \ \ x\to\infty.
\end{equation} Since the
supremum functional is $M_1$-continuous, we obtain (\ref{102}) and (\ref{103})
using the
continuous mapping theorem.

\vspace{.2cm}\noindent {\sc Case 2}: $g(x)\neq x/\me \xi$. Then
$Z$ is a $1$-stable random variable. Set $\mathcal{Z}=(Z(t):t\geq
0)$, where $$Z(t)=Z_1(t)-t\log t, \ \ t\geq 0,$$ and
$(Z_1(t):t\geq 0)$ is a stable L\'{e}vy process such that
$Z_1(1)\od Z$. With this notation we derive \eqref{101} from
\cite[Theorem 2]{Haan}, from which \eqref{102}, \eqref{103} follow
along the above lines.

Now it remains to estimate
\begin{eqnarray*}{\underset{y\in[0,x]}{\inf}(\rho(y)-g(y))\over
f(x)}(F(x)-F(\delta))&\leq&
{\underset{y\in[0,x-\delta]}{\inf}(\rho(y)-g(y))\over
f(x)}(F(x)-F(\delta)) \\&\leq& T_2(x)\\&\leq& {\underset{y\in
[0,x]}{\sup}(\rho(y)-g(y))\over f(x)}(F(x)-F(\delta))\\&\leq&
{\underset{y\in[0,x]}{\sup}(\rho(y)-g(y))\over
f(x)}(F(x)-F(\delta)).
\end{eqnarray*}
Using \eqref{102} and \eqref{103}, we conclude that
$\underset{\delta\to\infty}{\lim}\lix T_2(x)=0$ in probability.
The proof of \eqref{20} is complete.

In view of \eqref{19}, $\me (M(x)-N(x))^2=o(x)$. Since $a^2(x)$
grows not slower than $x$ (see \cite[Proposition A.1]{GINR}),
Chebyshev's inequality yields
$${N(x)-M(x)\over f(x)}\ \tp \ 0, \ \ x\to\infty.$$ Now \eqref{21}
follows from \eqref{20}.

It remains to establish \eqref{24}. To this end, introduce for $x>1$
\begin{eqnarray*}
Q_1(x):=\int_1^x e^{-y}(N(\log x)-N(\log x-\log y)){\rm d}y\geq 0,\\
Q_2(x):=\int_0^1 e^{-y}(N(\log x-\log y)-N(\log x)){\rm d}y\geq 0.
\end{eqnarray*}
Using
$$\me N(x)=\int_0^x F(x-y){\rm
d}\,U(y)=-F(x)+
\int_{0}^x U(x-y){\rm d}\,F(y)$$
and
\eqref{estim}, we conclude that for $y\in (1,x)$,
$$\me N(\log x)-\me N(\log x-\log y)\leq C_1(1+F(0))\log y+C_2(1+F(0)).$$
Therefore, $\me Q_1(x)=O(1)$, as $x\to\infty$, whence
${Q_1(x)\over f(\log x)}\tp 0$. Similarly, ${Q_2(x)\over f(\log
x)}\tp 0$. Thence, recalling \eqref{25}
$${Q_1(x)-Q_2(x)\over f(x)}={(1-e^{-x})N(\log x)-R(x)-(1-e^{-x})N(0)\over f(x)} \
\tp \ 0, \ \ x\to\infty.$$ As $N(\log x)$ grows in probability not
faster than  logarithm, we conclude that $${N(\log x)-R(x)\over
f(\log x)} \ \tp \ 0, \ \ x\to\infty.$$ Now an appeal to
\eqref{21} completes the proof.
\end{proof}

\section{Proof of Theorem \ref{main}
}

Set $$S^\ast_0:=0 \ \ \text{and} \ \ S^\ast_k:=|\log
W_1|+\ldots+|\log W_k|, \ \ k\in\mn,$$ and
$$T^\ast_k:=S^\ast_{k-1}+|\log (1-W_k)|, \ \ k\in\mn.$$ The
sequence $(T^\ast_k)_{k\in\mn}$ is a perturbed random walk. Since
$$\rho^\ast(x)=\inf\{k\in\mn: S^\ast_k>x\}, \ \ N^\ast(\log x):=\#\{k\in\mn: T_k \leq \log
x\},$$ an appeal to Theorem \ref{infinite} (case $g=0$) and to
Theorem \ref{28} (case $g\neq 0$) proves the result for
$N^\ast(\log n)$. To prove the statement for $K_n$ we shall use
the poissonization.

\vspace{.2cm}\noindent {\sc Step 1}.
We first check that
\begin{equation}\label{8}
\underset{t\to\infty}{\lim}\me\, {\rm Var}
(K(t)|(P_k))={\log 2 \over\mu},
\end{equation}
which is $0$ for $\mu=\infty$.
Plainly, this will imply
that
\begin{equation}\label{1}
{K(t) -\me (K(t) |(P_k))\over q(t)} \ \tp \ 0,
\end{equation}
for any function $q(t)$ such that $\lit q(t)=\infty$.

According to \cite[formula (25)]{Karlin},
$${\rm
Var}(K(t)|(P_k))=
\sum_{k=1}^\infty\left(e^{-tP_k}-e^{-2tP_k}\right).
$$
With $U^\ast(x):=\sum_{k=0}^\infty\mmp\{S^\ast_k \leq x\}$ and
$\varphi(t):=\me e^{-t(1-W)}$ we obtain
\begin{eqnarray*}
\me\, {\rm Var}(K(t)|(P_k))&=&\me \sum_{k=1}^\infty
\bigg(\varphi(te^{-S^\ast_{k-1}})-\varphi(2te^{-S^\ast_{k-1}})\bigg)\nonumber\\
&=& \int_0^\infty
\bigg(\varphi(te^{-x})-\varphi(2te^{-x})\bigg){\rm d}U^\ast(x)
\label{eq7},
\end{eqnarray*}
which is the same as
\begin{equation}\label{key1}
\me\, {\rm Var}(K(e^x)|(W_k))=\int_0^\infty A(x-y){\rm
d}U^\ast(x).
\end{equation}
for $A(t):=\varphi(e^t)-\varphi(2 e^t)$, $t\in\mr$.
To proceed, observe that
$$\int_0^\infty {e^{-z(1-W)}-e^{-2z(1-W)}\over z}\,{\rm d}z=\log
2,$$ which implies that $A(t)$ is integrable, since by Fubini's
theorem,
\begin{eqnarray*}\label{aux}
\int_{\mr} A(t){\rm d}t&=&\int_0^\infty
{\varphi(z)-\varphi(2z)\over z}\,{\rm d}z\\ &=& \me \int_0^\infty
{e^{-z(1-W)}-e^{-2z(1-W)}\over z}\,{\rm d}z=\log 2.
\end{eqnarray*}
Furthermore, arguing in the same way as in \cite[Section 5]{GINR}
we can prove that $A(t)$ is directly Riemann integrable.
Therefore, application of the key renewal theorem on $\mr$ to
\eqref{key1} yields \eqref{8}.

Chebyshev's inequality together with \eqref{8} imply that, for every
$\varepsilon>0$,
$$\lit \mmp\{|K(t)-\me (K(t)|(P_k))|>\varepsilon
q(t)|(P_k)\}=0 \ \ \text{in probability},$$ which proves
\eqref{1} upon taking expectation and invoking the Lebesgue
bounded convergence theorem.

\vspace{.2cm}\noindent {\sc Step 2}. Step 1 implies that
${K(t)-g(t)\over f(t)}$ weakly converges to a proper and
nondegenerate probability law if and only if ${\me
(K(t)|(P_k))-g(t)\over f(t)}={R^\ast(t)-g(t)\over f(t)}$ weakly
converges to the same law.

Using this observation, Theorem \ref{infinite} (in case $g=0$) and
formula \eqref{24} of Theorem \ref{28} (in case $g\neq 0$) we
conclude that weak convergence of ${\rho^\ast(x)-g(x)\over f(x)}$
to some distribution $\theta$ implies the weak convergence of both
$${R^\ast(t)-\int_0^{\log t} g(\log
t-y)\,\mmp\{|\log(1-W)|\in {\rm d}y\} \over f(\log t)}$$
and
$${K(t)-\int_0^{\log t} g(\log t-y)\,\mmp\{|\log(1-W)|\in {\rm
d}y\} \over f(\log t)}$$ to $\theta$.

\vspace{.2cm}\noindent {\sc Step 3}. It remains to pass from the poissonized occupancy model
to the fixed-$n$ model. Since
$f(\log t)$ is slowly varying, and in view of \eqref{30},
$$b(t):=\int_0^{\log t} g(\log t-y)\,\mmp\{|\log(1-W)|\in {\rm
d}y\}$$ satisfies
$$\lit {b(t)-b([t(1\pm \varepsilon)])\over
f(\log t)}=0$$ for every $\varepsilon>0$. We thus have
$$X_{\pm}(t):={K(t)-b(\lfloor t(1\pm \varepsilon)\rfloor)
\over f(\log (\lfloor t(1\pm \varepsilon)\rfloor))}\ \Rightarrow \ \theta.$$
Let $C_t$ be the event that the number of balls thrown before time $t$
lies in the limits from $\lfloor(1-\varepsilon)t\rfloor$ to  $\lfloor (1+\varepsilon)t\rfloor\}$.
By monotonicity of $K_n$, we have

\begin{eqnarray*} X_-(t)&\geq&
X_-(t)1_{Z_t}\\&\geq& {K_{\lfloor(1-\varepsilon)t\rfloor}-b(\lfloor t(1-
\varepsilon)\rfloor)\over f(\log (\lfloor t(1-\varepsilon)\rfloor ))}1_{C_t}.
\end{eqnarray*}
Since ${\mathbb P}(C_t) \to  1$, we conclude that
$$\theta(x,\infty)\geq
\underset{n\to\infty}{\lim\sup}\,\mmp\bigg\{{K_n-b(n)\over f(\log
n)}>x\bigg\},$$ for all $x\geq 0$. To prove the converse
inequality for $\liminf$, one has to note that
$$X_+(t)1_{(C_t)^c}\ \ \tp \ 0,$$ and proceed in the same manner.
The proof is complete.

\begin{rem}\label{rema}
Here is the promised verification of \eqref{27}. Below we use
terminology introduced in the proof of Lemma \ref{con}.
\end{rem}
\begin{lemma}\label{tec}
Relation \eqref{27} is a property of the class of $f$-equivalent
functions $g$.
\end{lemma}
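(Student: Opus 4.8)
The plan is to exploit the linearity of the functional in \eqref{27}. Write $G$ for the distribution function of $|\log(1-W)|$, which is proper since $|\log(1-W)|<\infty$ almost surely, and introduce the linear operator $D_g(x):=g(x)-\int_0^x g(x-y)\,{\rm d}G(y)$, so that \eqref{27} reads $\lix D_g(x)/f(x)=0$. If $g_1$ and $g_2$ are $f$-equivalent, then $h:=g_1-g_2$ satisfies $\lix h(x)/f(x)=0$, and by linearity $D_{g_1}(x)-D_{g_2}(x)=D_h(x)$. Hence the whole lemma reduces to the single implication that $\lix h(x)/f(x)=0$ forces $\lix D_h(x)/f(x)=0$; granting this, $D_{g_1}/f$ and $D_{g_2}/f$ converge to $0$ simultaneously, which is precisely the assertion that the validity of \eqref{27} depends only on the $f$-equivalence class of $g$.

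By the classification of normalizations for $\rho(x)$ already recalled in the proof of Lemma~\ref{con}, I may assume $g\not\equiv 0$, since otherwise $D_g\equiv 0$ and \eqref{27} is automatic for the entire class. In that regime $f$ is regularly varying at $\infty$ of positive index $1/\alpha\in[1/2,1]$ (consistent with the slow variation of $f(\log t)$ used in Step~3 of the proof of Theorem~\ref{main}), so in particular $f\to\infty$. Since $h(x)/f(x)\to 0$ already disposes of the term $h(x)/f(x)$, it remains to show $f(x)^{-1}\int_0^x h(x-y)\,{\rm d}G(y)\to 0$. First I would fix $\varepsilon>0$, choose $M$ with $|h(z)|\le\varepsilon f(z)$ for $z\ge M$, and set $B_M:=\sup_{[0,M]}|h|<\infty$. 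Splitting the integral at $x-M$: on $(x-M,x]$ the argument $x-y$ lies in $[0,M)$, so this piece is at most $B_M(1-G(x-M))/f(x)$, which vanishes because $G$ is proper (so $1-G(x-M)\to 0$) and $f\to\infty$; on $[0,x-M]$ the argument exceeds $M$, so this piece is bounded by $\varepsilon f(x)^{-1}\int_0^{x-M}f(x-y)\,{\rm d}G(y)$.

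The hard part will be the uniform bound $f(x)^{-1}\int_0^{x-M}f(x-y)\,{\rm d}G(y)\le C$. Here I would invoke a Potter-type estimate: since $f$ is regularly varying of positive index $\rho=1/\alpha$, for a small $\delta\in(0,\rho)$ there is a constant so that $f(u)/f(x)\le C\,(u/x)^{\rho-\delta}\le C$ for all $M\le u\le x$ with $M$ taken large enough, the factor $(u/x)^{\rho-\delta}\le 1$ being what makes the positivity of the index essential (this is exactly the step that fails in the slowly varying case $g\equiv 0$, which is why that case must be dispatched trivially). Taking $u=x-y$ gives $f(x-y)\le C f(x)$ uniformly over $0\le y\le x-M$, whence $\int_0^{x-M}f(x-y)\,{\rm d}G(y)\le C f(x)\int_0^\infty{\rm d}G=C f(x)$ because $G$ is a probability measure. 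Combining the two pieces yields $\limsup_{x\to\infty}f(x)^{-1}\bigl|\int_0^x h(x-y)\,{\rm d}G(y)\bigr|\le C\varepsilon$, and letting $\varepsilon\downarrow 0$ completes the argument.
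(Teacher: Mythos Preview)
Your proof is correct and follows essentially the same route as the paper's: reduce by linearity to showing $\int_0^x h(x-y)\,{\rm d}G(y)/f(x)\to 0$ for $h=g_1-g_2$, split the integral at $x-M$, and control the main piece via a uniform bound on $f(x-y)/f(x)$. The only cosmetic difference is that the paper obtains this uniform bound by passing to a nondecreasing representative of $f$ (so $f(x-y)/f(x)\le 1$), whereas you invoke Potter bounds to get $f(x-y)/f(x)\le C$; both arguments use exactly the same structural fact, namely that $f$ is regularly varying of index in $[1/2,1]$. One small wording issue: your sentence ``otherwise $D_g\equiv 0$ and \eqref{27} is automatic for the entire class'' asserts more than it proves, but since the paper itself works under the standing hypothesis that $f$ has index in $[1/2,1]$ (i.e.\ the $g\not\equiv 0$ regime), this is harmless in context.
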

\begin{proof}
Assume that $g$ satisfies \eqref{27}. We have to show that any
$g_1$ such that $\lix {g(x)-g_1(x)\over f(x)}=0$ satisfies
\eqref{27}, as well.

Plainly, it is enough to check that
\begin{equation}\label{1212}
A(x):= {\int_0^x (g(x-y)-g_1(x-y)){\rm d}\,F(y)\over f(x)}\ \to 0,
\ \ x\to\infty.
\end{equation}
For any $\varepsilon>0$ there exists $x_0>0$ such that for all
$x>x_0$ ${|g(x)-g_1(x)|\over f(x)}<\varepsilon$. Since $f$ is
regularly varying with index $\beta\in [1/2, 1]$, without loss of
generality we can assume that $f$ is nondecreasing. Hence
\begin{eqnarray*}
|A(x)| & \leq & \int_0^{x-x_0}{|g(x-y)-g_1(x-y)|\over f(x-y)}{\rm
d}\,F(y)\\&+&
\int_{x-x_0}^x {|g(x-y)-g_1(x-y)|\over f(x-y)}{\rm d}\,F(y)\\
        & \leq & \varepsilon + \underset{y\in [0,x_0]}{\sup}\,{|g(y)-g_1(y)|\over f(y)}(F(x)-F(x-x_0)).
\end{eqnarray*}
Sending $x\to\infty$ and then $\varepsilon\downarrow 0$ proves
\eqref{1212}.
\end{proof}
If the law of $|\log W|$ belongs to the domain of attraction of an
$\alpha$-stable law, $\alpha\in (1,2]$ then $(\rho(x)-g(x))/f(x)$
weakly converges with $g(x)=x/\mu$ and appropriate $f(x)$. Such a
$g$ trivially verifies \eqref{27} which, by Lemma \ref{tec},
entails that every $g_1$ from the same $f$-equivalence class
verifies \eqref{27}.

If the law of $|\log W|$ belongs to the domain of attraction of a
$1$-stable law, then $(\rho(x)-g(x))/f(x)$ weakly converges for
$g(x)={x\over m(r(x/m(x)))}$ and $f(x)={r(x/m(x))\over m(x)}$,
with $m$ and $r$ as defined in part (d) of Corollary \ref{main2}.
Since $r$ is regularly varying with index one, without loss of
generality we can assume it and hence $g$ are differentiable.
Since ${g(x)\over xf(x)}$ is regularly varying with index $(-1)$,
it converges to $0$, as $x\to\infty$. Besides that, $\lix
x\mmp\{\zeta>x\}=0$ in view of $\nu<\infty$, where we denoted
$|\log(1-W)|$ by $\zeta$. Hence,
$$\lix {g(x)\mmp\{\zeta>x\}\over f(x)}=0.$$ Thus it suffices to
check that
\begin{equation}\label{tec1}
\lix {\me(g(x)-g(x-\zeta))1_{\{\zeta\leq x\}}\over f(x)}=0.
\end{equation}
Now subadditivity and differentiability of $g$ can be exploited in
order to show that $$|g(x)-g(y)|\leq K|x-y|, \ \ x,y>0,$$ where
$K:=1/m(r(1))$. This immediately implies \eqref{tec1} and the
whole claim by virtue of Lemma \ref{tec}.

\vskip0.1cm \noindent {\bf Acknowledgement} A. Iksanov gratefully
acknowledges the support by a grant from the Utrecht University.

\end{document}